\newtheorem{theorem}{Theorem}
\newtheorem{theoremlett}{Theorem}
\newtheorem{corollary}[theorem]{Corollary}
\newtheorem{remark}[theorem]{Remark}
\newcommand{\Tr}{\mathrm{Tr}}
\newcommand{\C}{\mathbb{C}}
\newcommand{\ler}[1]{\left( #1 \right)}
\newcommand{\abs}[1]{\left| #1 \right|}
\newcommand{\be}{\begin{equation}}
\newcommand{\ee}{\end{equation}}
\newcommand{\ba}{\begin{array}}
\newcommand{\ea}{\end{array}}
\newcommand{\emp}{\mathrm{m}_p}
\newcommand{\eps}{\varepsilon}
\newcommand{\Oeps}{\underline{\underline{\mathrm{O}}}\left(\varepsilon^2\right)}
\newcommand{\kif}{\underline{\underline{\mathrm{O}}}\left(\varepsilon^3\right)}
\newcommand{\fel}{\frac{1}{2}}
\newcommand{\cA}{\mathcal{A}}
\newcommand{\tr}{\mathrm{tr}}
\title{Preservers of the $p$-power and the Wasserstein means on $2 \times 2$ matrices}
\author[Rich\'ard Simon]{Rich\'ard Simon}
\address{Rich\'ard Simon, Department of Analysis, Institute of Mathematics, Budapest University of Technology and Economics, Műegyetem rkp. 3., H-1111 Budapest, Hungary.}
\email{sr570063@gmail.com; simonr@math.bme.hu}
\author[D\'aniel Virosztek]{D\'aniel Virosztek}
\address{D\'aniel Virosztek, Alfr\'ed R\'enyi Institute of Mathematics\\ Re\'altanoda u. 13-15.\\Budapest H-1053\\ Hungary}
\email{virosztek.daniel@renyi.hu}
\date{}
\subjclass[2020]{Primary: 15A24. Secondary: 47A64.}
\keywords{power means; Wasserstein mean; preservers}
\thanks{Virosztek was supported by the Momentum Program of the Hungarian Academy of Sciences (grant no. LP2021-15/2021) and partially supported by the ERC Synergy Grant No. 810115.}
\begin{document}

\maketitle

\begin{abstract}
In one of his recent papers \cite{ML1}, Moln\'ar showed that if $\cA$ is a von Neumann algebra without $I_1, I_2$-type direct summands, then any function from the positive definite cone of $\cA$ to the positive real numbers preserving the Kubo-Ando power mean, for some $0 \neq p \in (-1,1)$ is necessarily constant. It was shown in that paper, that $I_1$-type algebras admit nontrivial $p$-power mean preserving functionals, and it was conjectured, that $I_2$-type algebras admit only constant $p$-power mean preserving functionals. We  confirm the latter. A similar result occurred in another recent paper of Moln\'ar \cite{ML2} concerning the Wasserstein mean. We prove the conjecture for $I_2$-type algebras in regard of the Wasserstein mean, too. We also give two  conditions that characterise centrality in $C^*$-algebras. 
\end{abstract}

\section{Introduction}
A preserver is a map $\Phi$ from a set $\mathcal{X}$ to a set $\mathcal{Y}$ (often $\mathcal{Y}=\mathcal{X})$ such that $\Phi$ preserves a subset of elements, operation, quantity or structure of $\mathcal{X}$. Preservers arise naturally in most areas of mathematics, for example homomorphisms, isomorphisms in algebra, isometries in the study of metric spaces, homeomorphisms, diffeomorphisms in topology etc. However, the area where preserver problems are systematically studied is mostly matrix theory, and its infinite dimensional counterpart, operator theory. Linear preserver problems (where $\Phi$ is assumed to be linear) have  distinguished importance among preserver problems. For example, one of the most well-known results of this type is due to Frobenius who showed in \cite{Frob} that if $\Phi: M_n(\C) \to M_n(\C)$ is linear and satisfies $\mathrm{det}(A)=\mathrm{det}(\Phi(A))$ for all $A \in M_n(\C)$ then $\Phi$ is necessarily of the form
$$\Phi(A)=MAN \,\, (A \in M_n(\C))$$
or 
$$\Phi(A)=MA^{\mathrm{tr}}N \,\, (A \in M_n(\C))$$
where $A^{\mathrm{tr}}$ denotes the transpose of $A$ and $M,N$ are non-singular matrices such that $\mathrm{det}(MN)=1$. A more recent result of similar nature is due to Dolinar and \v{S}emrl. They showed in \cite{DS} that if $\Phi:M_n(\C) \to M_n(\C)$ is a surjective map satisfying
$\mathrm{det}(A+\lambda B)=\mathrm{det}(\Phi(A)+\lambda \Phi(B))$ for all $A,B \in M_n(\C), \, \lambda \in \C$, then $\Phi$ is necessarily of the form of
$$\Phi(A)=MAN \,\, (A \in M_n(\C))$$
or of the form of
$$\Phi(A)=MA^{\mathrm{tr}}N \,\, (A \in M_n(\C))$$
where $M,N$ are nonsingular matrices such that $\mathrm{det}(MN)=1$.
Another celebrated result about linear preservers is due to Banach and Stone. Their theorem concerns surjective linear isometries between continuous function algebras of compact Hausdorff spaces \cite{Banach}, \cite{Stone}.
\begin{theoremlett} \label{thm:banach-stone}
Let $X,Y$ be compact Hausdorff spaces and let $C(X),C(Y)$ denote the continuous function algebras of $X$ and $Y$ with respect to the supremum norm. Let $T:C(X) \to C(Y)$ be a surjective linear isomorphism. Then there exists a homeomorphism $\varphi:Y \to X$ and a function $g \in C(Y)$ with $|g(y)|=1 \,\, \forall y \in Y$ such that $(Tf)(y)=g(y)f(\varphi(y)) \,\, \forall f \in C(X), y \in Y$. 
\end{theoremlett}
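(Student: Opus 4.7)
The plan is to pass to the dual side and examine how the adjoint $T^{*}$ acts on point-evaluation functionals. By the Riesz representation theorem, $C(X)^{*}$ is identified with the space of complex regular Borel measures on $X$ equipped with total variation norm. The key structural input I would invoke is the extreme-point characterisation of the closed unit ball of $C(X)^{*}$: its extreme points are precisely the measures of the form $\lambda \delta_{x}$ with $x \in X$ and $|\lambda|=1$. Since $T$ is a surjective linear isometry, $T^{*}$ is an isometric bijection between the dual spaces and therefore sends extreme points of the unit ball to extreme points.

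From this I obtain, for each $y\in Y$, a point $\varphi(y)\in X$ and a scalar $g(y)\in \C$ with $|g(y)|=1$ satisfying $T^{*} \delta_{y} = g(y)\delta_{\varphi(y)}$. The desired formula $(Tf)(y)=g(y) f(\varphi(y))$ then falls out of the duality pairing $(Tf)(y) = \langle \delta_{y}, Tf\rangle = \langle T^{*} \delta_{y}, f\rangle$. Continuity of $g$ is immediate: plugging in the constant function $\mathbf{1}$ yields $g = T\mathbf{1} \in C(Y)$. For continuity of $\varphi$, I would argue by nets: if $y_{\alpha} \to y$ in $Y$, then $(Tf)(y_{\alpha}) \to (Tf)(y)$ for every $f \in C(X)$; dividing by $g(y_{\alpha}) \to g(y)$ gives $f(\varphi(y_{\alpha})) \to f(\varphi(y))$ for every $f\in C(X)$, and since $C(X)$ separates points of the compact Hausdorff space $X$ (Urysohn), this forces $\varphi(y_{\alpha}) \to \varphi(y)$.

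To upgrade $\varphi$ to a homeomorphism, I would run the same argument on $T^{-1}: C(Y) \to C(X)$, obtaining a continuous $\psi: X \to Y$ and unimodular $h \in C(X)$ with $(T^{-1}k)(x) = h(x) k(\psi(x))$. Substituting one representation into the other and comparing with $T^{-1}T = \id_{C(X)}$ and $TT^{-1} = \id_{C(Y)}$, tested against suitable continuous functions, forces $\psi$ and $\varphi$ to be mutually inverse, so $\varphi$ is a homeomorphism of $Y$ onto $X$.

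The main technical obstacle will be the extreme-point characterisation of the unit ball of $C(X)^{*}$; this rests on the Krein-Milman theorem together with a direct argument showing that any unit-norm measure whose support contains at least two points is a non-trivial convex combination of two distinct unit-norm measures (for instance, by combining the Jordan decomposition with a restriction to disjoint open neighbourhoods of two points in the support). Everything past that point is bookkeeping through the duality pairing.
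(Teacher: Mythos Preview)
The paper does not prove Theorem~\ref{thm:banach-stone}; it is quoted in the introduction as a classical result of Banach and Stone, with references \cite{Banach} and \cite{Stone}, purely to illustrate the kind of linear preserver theorems that motivate the subject. There is therefore no ``paper's own proof'' to compare against.

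That said, your argument is the standard and correct one (essentially the Arens--Kelley approach): identify $C(X)^*$ with regular Borel measures, use that the extreme points of its closed unit ball are exactly the unimodular multiples of Dirac measures, and exploit that the adjoint of a surjective isometry permutes extreme points. The bookkeeping you describe for recovering $g$, the continuity of $\varphi$ via nets and Urysohn, and the homeomorphism upgrade via $T^{-1}$ are all sound. One small point worth tightening in the net argument: from $f(\varphi(y_\alpha))\to f(\varphi(y))$ for every $f\in C(X)$ you should invoke compactness of $X$ explicitly---pass to a convergent subnet of $\varphi(y_\alpha)$ and use Urysohn to identify its limit with $\varphi(y)$---since pointwise convergence under all $f\in C(X)$ alone does not literally force convergence without appealing to compactness. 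Also note that the paper's phrasing ``surjective linear isomorphism'' is imprecise; the surrounding text makes clear that a surjective linear \emph{isometry} is intended, which is what you (correctly) assume.
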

A Jordan-homomorphism is a linear map $J$ between algebras $\mathcal{A}$ and $\mathcal{B}$ such that $J(ab+ba)=J(a)J(b)+J(b)J(a), \,\, \forall a,b \in \mathcal{A}$. Elementary calculations show that a linear map $J$ is a Jordan-homomorphism if and only if $J(a^2)=J(a)^2 \,\, \forall a \in \mathcal{A}$, and all Jordan-homomorphisms preserve the Jordan triple product, that is, $J(bab)=J(b)J(a)J(b) \,\, \forall a,b \in \mathcal{A}$.
We give a theorem of Kadison from \cite{Kadison} as an example of a theorem concerning Jordan-homomorphisms which preserve a subset of elements of a $C^*$-algebra.
\begin{theoremlett} \label{thm:kadison}
Let $\mathcal{A}$ and $\mathcal{B}$ be unital $C^*$-algebras, and let $J: \cA \to \mathcal{B}$ be a bijective Jordan-homomorphism that maps precisely the self-adjoint elements of $\cA$ to self-adjoint elements of $\mathcal{B}$. Then $J$ is an isometry that preserves commutativity, that is if $ab=ba, \,\, a,b \in \cA$ then $J(a)J(b)=J(b)J(a)$. 
\end{theoremlett}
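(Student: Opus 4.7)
The plan is to proceed in three stages: first establishing that $J$ is a unital Jordan *-homomorphism, then proving the isometry, and finally deducing the commutativity preservation.

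First I would show that $J(1)=1$ and that $J$ preserves involution. Polarising the identity $J(a^2)=J(a)^2$ gives the Jordan product formula $J(ab+ba)=J(a)J(b)+J(b)J(a)$; specialising $b=1$ yields $J(1)J(a)+J(a)J(1)=2J(a)$, and by the surjectivity of $J$ this relation persists with any $c\in\mathcal{B}$ in place of $J(a)$, which forces $J(1)=1$. Adjoint preservation is then immediate from the hypothesis that $J$ maps \emph{precisely} the self-adjoint elements to self-adjoint ones: for $a\in\cA$ with Cartesian decomposition $a=a_1+ia_2$, the identity $J(a)=J(a_1)+iJ(a_2)$ is again a Cartesian decomposition of $J(a)$, so $J(a^*)=J(a)^*$. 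Bijectivity together with the symmetric character of the hypothesis then implies that $J^{-1}$ is a Jordan *-homomorphism of the same type.

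Second, I would prove the isometry. The Jordan triple product identity $J(bab)=J(b)J(a)J(b)$, combined with $J(a^2)=J(a)^2$ and unitality, shows that $J$ preserves the invertibility of self-adjoint elements; the same argument applied to $J^{-1}$ yields $\sigma(J(a))=\sigma(a)$ for $a=a^*$, whence $\|J(a)\|=r(J(a))=r(a)=\|a\|$ on the self-adjoint part. For general $a$, the $C^*$-identity $\|a\|^2=\|a^*a\|$ together with positivity preservation (each positive element is the square of a self-adjoint element, and $J$ takes squares to squares) gives $\|J(a^*a)\|=\|a\|^2$, and identifying $\|J(a)^*J(a)\|$ with $\|J(a^*a)\|$ completes the proof. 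The cleanest path to this identification, and to the commutativity step below, is via St{\o}rmer's decomposition theorem: $J$ splits as $J=\pi\oplus\rho$ on orthogonal central projections of $\mathcal{B}$, with $\pi$ a *-homomorphism and $\rho$ a *-antihomomorphism, both of which are isometric because they are injective.

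For the commutativity statement, assume $ab=ba$. Combining $J(aba)=J(a)J(b)J(a)$ with $aba=a^2b=ba^2$ and the Jordan product formula applied to $a^2b+ba^2$ yields the double-commutator identity $[J(a),[J(a),J(b)]]=0$. When $a$ is self-adjoint, so is $J(a)$, and this identity already forces $[J(a),J(b)]=0$ by a standard spectral argument: in the eigenspace decomposition of $J(a)$ the double commutator acts on the $(i,j)$-block of $J(b)$ as multiplication by $(\lambda_i-\lambda_j)^2$, so those blocks must vanish whenever $\lambda_i\neq\lambda_j$. The main obstacle is extending this to non-normal $a$: the Cartesian components of $a$ need not commute with $b$, since Fuglede--Putnam is unavailable outside the normal case, so one cannot simply reduce to the self-adjoint situation. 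This is precisely where St{\o}rmer's decomposition rescues the argument, as $\pi(a)\pi(b)=\pi(ab)=\pi(ba)=\pi(b)\pi(a)$ and $\rho(a)\rho(b)=\rho(ba)=\rho(ab)=\rho(b)\rho(a)$, so $J(a)J(b)=J(b)J(a)$ follows at once from the central decomposition.
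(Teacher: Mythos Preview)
The paper does not prove this theorem; it is quoted in the introduction as a classical result of Kadison \cite{Kadison} with a citation but no argument. There is therefore nothing in the paper against which to compare your proof.

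Your proposed argument is essentially correct and follows a now-standard route via St{\o}rmer's decomposition. Two small technical points deserve tightening. First, St{\o}rmer's theorem as usually stated requires the target to be a von Neumann algebra; for general unital $C^*$-algebras one passes to the embedding $\mathcal{B}\hookrightarrow\mathcal{B}^{**}$ and decomposes there, which is enough for both the norm computation (contractivity of $J$ and of $J^{-1}$ gives isometry) and the commutativity claim. Second, your ``eigenspace decomposition'' justification of $[J(a),[J(a),J(b)]]=0\Rightarrow[J(a),J(b)]=0$ for self-adjoint $J(a)$ is a finite-dimensional picture; in a general $C^*$-algebra the clean argument is Kleinecke--Shirokov (the inner commutator is quasinilpotent) together with the observation that, after splitting $b$ into Cartesian parts, $i[J(a),J(b)]$ is self-adjoint and hence zero. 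These are minor patches; the overall strategy is sound. It is worth noting, though, that this is not Kadison's original 1951 proof, which predates St{\o}rmer's 1965 decomposition and proceeds instead by direct spectral and order-theoretic arguments.
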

We now turn to the preservers of Kubo-Ando and other types of means, which are the main content of this paper.
Let $H$ be a complex Hilbert space, and denote by $B(H)$  the $C^*$-algebra of all bounded operators on $H$ and by $B(H)^+$ the convex cone of positive semidefinite operators with respect to the Löwner order. A Kubo-Ando mean is a binary operation $\sigma$ on $B(H)^+$ which satisfies the following conditions:
\begin{enumerate}
    \item $I\sigma I=I$
    \item if $ A \leq B$ and $C \leq D$ then $A \sigma B \leq C \sigma D$
    \item $C (A \sigma B)C \leq (CAC) \sigma (CBC)$
    \item $A_n \downarrow A$ and $B_n \downarrow B$ strongly, then $A_n \sigma B_n \downarrow A \sigma B$, where $A_n \downarrow A$ denotes monotone decreasing convergence in the Löwner order.
\end{enumerate}
A theory of such means was introduced by Kubo and Ando in \cite{KA}. They showed in Theorem 3.2 that for an infinite dimensional $H$ there is a one-to-one correspondence between Kubo-Ando means on $B(H)^+$ and the collection of operator monotone functions $f:(0,+\infty) \to (0,+\infty)$ with $f(1)=1$. The operator monotone function that corresponds to the Kubo-Ando mean $\sigma$ is given by the formula $f(t)I=I \sigma tI, \, t>0$. Conversely, the Kubo-Ando mean $\sigma$ corresponding to the operator monotone function $f$ is given by the formula $A \sigma B=A^{\fel}f(A^{-\fel}BA^{-\fel})A^{\fel}$. Because of the continuous function calculus, this formula makes sense in the setting of general $C^*$-algebras, too. The most distinguished Kubo-Ando means are the harmonic, geometric and arithmetic means. The corresponding operator monotone functions are $t \to \frac{2t}{1+t},\, t \to \sqrt{t},\, t \to \frac{1+t}{2} \, (t>0)$, respectively. Using the formula $A \sigma B=A^{\fel}f(A^{-\fel}BA^{-\fel})A^{\fel}$ these means turn out to be the following for invertible $A,B \in B(H)^+$:
$$A!B=2(A^{-1}+B^{-1})^{-1}, \, A\#B=A^{\fel}(A^{-\fel}BA^{-\fel})^{\fel}A^{\fel},\, A \nabla B=\frac{A+B}{2}.$$
The family of Kubo-Ando means we are mostly concerned about in this paper is the family of power means. They are defined for $p \in [-1,1]\setminus{\{0\}}$ as 
\be \label{eq:KA-def}
A \emp B=A^{\fel}\left(\frac{I+(A^{-\fel}BA^{-\fel})^p}{2}\right)^{\frac{1}{p}}A^{\fel}.
\ee
the corresponding operator monotone functions are of the form of $t \to (\frac{1+t^p}{2})^{\frac{1}{p}}$.
Note that both the Kubo-Ando power means and Kubo-Ando geometric mean have interesting non-Kubo-Ando counterparts. 
The so called conventional $p$-power means \cite{ML1} are obtained by applying function calculus naively and are defined as
$$
A m_p B=\ler{\frac{A^p+B^p}{2}}^{\frac{1}{p}}.
$$
A type of geometric mean(called spectral geometric mean) was introduced by Fiedler and Pták in \cite{FPT}.
It is defined as 
$$A \sharp B=(A^{-1} \#B)^{\fel}A(A^{-1} \#B)^{\fel}.$$
Another mean, called the Wasserstein mean was described in \cite{Bhat1}. It is defined as
$$A \sigma_W B=\frac{A+B+A \cdot A^{-1}\#B+A^{-1}\#B \cdot A}{4}.$$
It coincides with the conventional $p$-power mean for $p=1/2$ for commuting $A$ and $B$.
Note that the Kubo-Ando geometric mean and the Wasserstein mean have strong connections with Riemannian and Finsler geometry.
If the $C^*$-algebra in consideration is $M_n(\C)$, then there is a natural Riemannian metric on the positive definite cone of $M_n(\C)$.
For positive definite $A,B$ the Kubo-Ando geometric mean $A\#B$ is the midpoint of the unique geodesic curve 
$$t \to A^{\fel}(A^{-\fel}BA^{-\fel})^tA^{\fel}$$
connecting $A$ and $B$ in this Riemannian structure. In the more general setting of $C^*$-algebras, a Finsler type structure can be given to the positive definite cone. For more details, see the papers \cite{CPR1}, \cite{CPR2}, \cite{CPR3}, \cite{CPR4} and Chapter 6. of \cite{Bhatia}.
Recall that there is an important metric on the positive definite cone of $M_n(\C)$, called the Bures-Wasserstein metric.
It is widely used in quantum information theory and in the theory of optimal transport and it is defined as
$$d_{BW}(A,B)=\ler{\Tr(A)+\Tr(B)-2\Tr(A^{\fel}BA^{\fel})^{\fel}}^{\frac{1}{2}}$$
for positive definite matrices $A,B$. Here $\Tr$ stands for the standard trace functional on $M_n(\C)$. 
It was observed in \cite{Bhat1} that there is a Riemannian geometry on the positive definite cone of $M_n(\C)$, whose geodesic distance is exactly the Bures-Wasserstein metric, and the curve connecting two matrices $A$ and $B$ is 
$$t \to (1-t)^2A+t^2B+t(1-t)(A(A^{-1}\#B)+(A^{-1}\#B)A),\,\, t \in [0,1].$$
The midpoint of this curve is exactly the Wasserstein mean of $A$ and $B$.
The means mentioned above have been actively researched in the past decades, including from the viewpoint of preserver problems.
We give a very brief overview of results from this area.
A description of geometric mean preserving maps on Hilbert spaces was given in \cite{ML5}. If $H$ is a complex Hilbert space with $dim\,H \geq 2$, and $\Phi: B(H)^+ \to B(H)^+$ is a bijective map that preserves the geometric mean, ie. $\Phi(A\#B)=\Phi(A)\#\Phi(B), \,\, A,B \in B(H)^+ $, then $\Phi$ is necessarily of the form of 
$$\Phi(A)=SAS^* \,\, A \in B(H)^+,$$
for some  bounded, invertible linear or conjugate linear operator $S$ on $H$. The symbol $A\#B$ stands here for the variational expression of the geometric mean,
$$A \#B=\max \left\{X \geq 0: \begin{bmatrix} A & X \\ X & B \end{bmatrix} \geq 0 \right\}$$
introduced by Ando in \cite{Ando}. It coincides with the Kubo-Ando geometric mean for invertible operators.
In \cite{ML4}, Moln\'ar showed that if $\Phi$ is a bijective map on the positive definite cone of $B(H)$ for some complex Hilbert space $H$ that preserves the harmonic mean, ie. $\Phi(A!B)=\Phi(A)!\Phi(B)$ for all $A,B \in B(H)^{++}$, then there exists a bounded linear or cojugate linear operator $S$ on $H$ such that $$\Phi(A)=SAS^*, \,\, A \in B(H)^{++}.$$
Preservers of the spectral geometric mean on Hilbert spaces were described in \cite{LMW}. If $H$ is a complex Hilbert space, and $\Phi: B(H)^{++} \to B(H)^{++}$ is a continuous bijective map satisfying 
$$\Phi(A \sharp B)=\Phi(A) \sharp \Phi(B), \,\, A,B \in B(H)^{++}$$
and in addition, $\Phi(I)=I$ and $\Phi$ has a continuous bijective extension to $B(H)^+$ then there exists a unitary or antiunitary operator $U$ on $H$ such that
$$\Phi(A)=UAU^*, \,\, A \in B(H)^{++}.$$
Let us now turn to the results that motivated the present work. Functionals(scalar valued maps) have a distinguished role in functional analysis.
In \cite{ML1}, Moln\'ar showed that if $\cA$ is a von Neumann algebra without $I_1,I_2$ type direct summands, then there are only trivial functionals on the positive definite cone of $\cA$ that preserve the Kubo-Ando power mean for $p \in (-1,1)$, ie. if $f:\cA^{++} \to (0, +\infty)$, such that $f(A \emp B)=f(A) \emp f(B), \, A,B \in \cA^{++}$, then $f \equiv c$ for some positive constant $c$. In commutative algebras, the conventional and Kubo-Ando power means coincide. The conventional power means have non-constant preserving functionals, their complete description was given in Proposition 5. in \cite{ML1}.
The absence of $I_2$ type algebras in the statement comes from the nature of the proof: it heavily relies on the solution of the Mackey-Gleason problem due to Brunce and Wright. It was conjectured in \cite{ML1}, that the result remains true for $I_2$ type algebras, too.
A result and a conjecture of similar nature were formulated in \cite{ML2} concerning the Wasserstein mean. We confirm these two conjectures about the preserver functionals of the Wasserstein and Kubo-Ando power means on $M_2(\C)$.

\section{Basic notions, notation}
We denote abstract $C^*$-algebras with $\cA$. All $C^*$-algebras are assumed to have an identity element, denoted by $I$.  We denote by $\cA_{sa}, \cA^+, \cA^{++}$, the real vector space of self-adjoint elements, the positive semidefinite, and the positive definite cone of the algebra $\cA$.
We use the same notation in regard of the particular $C^*$-algebra $M_2(\mathbb{C})$.

\section{Main result}

\begin{theorem} \label{thm:main}
Set $p\in (-1,1)\setminus{\{0\}}$ and let $f: M_2^{++}\ler{\C} \rightarrow (0, \infty)$ satisfy
\be \label{eq:pres-eq}
f\ler{A \emp B} = f(A) \emp f(B) \qquad \ler{A, B \in  M_2^{++}\ler{\C}}
\ee
where $\emp$ denotes the Kubo-Ando $p$-power mean. Then $f \equiv \mathrm{const}>0.$
\end{theorem}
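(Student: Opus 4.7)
The plan is to translate the multiplicative-looking equation~\eqref{eq:pres-eq} into an additive Jensen-type equation and then exploit the non-commutative structure of $M_2^{++}(\C)$ to kill all non-constant solutions. Since $f$ takes positive scalar values, the right-hand side of~\eqref{eq:pres-eq} is the \emph{conventional} scalar $p$-mean, so setting $g:=f^p$ converts \eqref{eq:pres-eq} into the Jensen-type identity
\be \label{eq:jensen}
g\ler{A \emp B} \;=\; \frac{g(A)+g(B)}{2}, \qquad A, B \in M_2^{++}(\C),
\ee
and the goal becomes to prove that $g$ is constant.

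A warm-up observation is what \eqref{eq:jensen} says on commuting pairs. There $A \emp B$ agrees with the conventional power mean and lies in the commutative $C^*$-subalgebra generated by $A$ and $B$; in particular on $\{tI : t>0\}$ one recovers only the scalar version of the equation, whose solutions are classified by Moln\'ar (Proposition~5 of \cite{ML1}) and include non-constant examples. Hence the proof must use non-commuting pairs in an essential way, which is precisely where $M_2(\C)$ offers more than the commutative case.

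The main technical step that I would attempt combines two kinds of identities. First, if $A=\mathrm{diag}(\lambda_1,\lambda_2)$ with $\lambda_1\neq\lambda_2$ and $V$ is the coordinate-swap unitary, then $A$ and $VAV^*$ commute and $A \emp VAV^* = (\lambda_1 \emp \lambda_2)\,I$ is a scalar matrix, so \eqref{eq:jensen} forces
$$
g(A)+g(VAV^*) \;=\; 2\,g\bigl((\lambda_1 \emp \lambda_2)\,I\bigr).
$$
Second, conjugating $A$ by a generic unitary $U$ produces a matrix $UAU^*$ that does \emph{not} commute with $A$, but the mean $A \emp UAU^*$ can still be handled explicitly via~\eqref{eq:KA-def} because the whole configuration sits inside an $\sut$-orbit, which in $M_2$ is geometrically just a $2$-sphere (the Bloch picture). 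Sweeping $U$ through a one-parameter family and combining with the previous identity should force $g$ to be constant on each unitary orbit $\{UAU^*:U\in U(2)\}$; then a final argument relating distinct orbits through the scalar ``hubs'' $(\lambda_1 \emp \lambda_2)\,I$ should collapse $g$ to a single value on all of $M_2^{++}(\C)$.

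The main obstacle is the total absence of regularity hypotheses on $f$: no continuity, no measurability. All arguments must therefore be purely algebraic, and pathological Hamel-basis style Jensen solutions have to be excluded by the abundance of algebraic constraints supplied by non-commutativity. Producing an overdetermined family of relations of the form $A_i \emp B_i = A'_i \emp B'_i$ with genuinely different endpoints, and then checking that the resulting linear system on $g$ admits only constant solutions, is the heart of the argument; this is where the explicit power-mean formula~\eqref{eq:KA-def} and the specific low rank $n=2$ will be used most heavily.
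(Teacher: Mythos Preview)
Your opening move---setting $g=f^p$ to obtain the Jensen-type identity $g(A\emp B)=\tfrac12(g(A)+g(B))$---is essentially the same substitution the paper makes (the paper writes $\varphi(X)=f(X^{1/p})^p$, which is the same map read in the variable $A^p$). From that point on, however, the two arguments diverge, and your outline has a genuine gap.

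You correctly flag the absence of any regularity hypothesis on $f$ as the main obstacle, but you then propose to overcome it by a ``purely algebraic'' overdetermined system of mean-identities coming from non-commuting unitary conjugates. This is the step that is not carried out, and as stated it is not clear it can be: Jensen maps without regularity admit Hamel-basis pathologies, and an unspecified family of relations $A_i\emp B_i=A_i'\emp B_i'$ is not enough to rule them out. The paper's key observation, which you miss, is that no such algebraic tour de force is needed: the map $g$ (equivalently $\varphi$) takes \emph{positive} values, and a Jensen map bounded below on an open set is automatically continuous, hence affine, on each maximal abelian subalgebra (this is the classical Cauchy--Jensen argument, cf.\ Kuczma). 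Positivity alone supplies all the regularity.

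Once $\varphi$ is known to be affine on every commutative sub-cone, the paper does \emph{not} try to prove constancy on unitary orbits and then glue orbits through ``scalar hubs'' as you propose. Instead it writes $\varphi(tI+sG)=c_I t+c_G s+(1-c_I)$ on each MASA, picks the single pair $A_\eps=I+\eps\sigma_z$, $B_\eps=I+\eps\sigma_x$, observes that $A_\eps\emp B_\eps$ lands in the MASA generated by $\tfrac{1}{\sqrt 2}(\sigma_x+\sigma_z)$, and expands the preserver equation to second order in $\eps$. Matching the $\eps^2$-coefficients yields $c_I(p-1)^2=0$, hence $c_I=0$ and therefore $c_G=0$ for every $G$, so $f$ is constant. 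Your orbit-by-orbit strategy might be salvageable once the affine structure is in hand, but as written it is only a programme; the decisive missing ingredient is the positivity-implies-continuity step.
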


\begin{proof}
We first note that if $f$ is a solution of \eqref{eq:pres-eq}, then so is $\lambda f$ for any $\lambda>0.$ Therefore, without loss of generality assume that $f(I)=1.$ Let us define
\be \label{eq:phidef}
\varphi=\ler{.}^p \circ f \circ \ler{.}^{1/p}.
\ee
Then the preserver equation \eqref{eq:pres-eq} reads in terms of $\varphi$ as
\be \label{eq:pres-eq-phi}
\varphi\ler{\ler{A \emp B}^p}=\frac{\varphi\ler{A^p}+\varphi\ler{B^p}}{2} \qquad \ler{A, B \in  M_2^{++}\ler{\C}}.
\ee
In particular, if $A_1$ and $A_2$ commute, then
\be \label{eq:phi-a1-a2}
\varphi\ler{\frac{A_1^p+A_2^p}{2}}=\frac{\varphi\ler{A_1^p}+\varphi\ler{A_2^p}}{2}.
\ee
A maximal Abelian subalgebra of $M_2\ler{\C}$ is clearly of the form $\cA=\mathrm{linspan}\langle P, I-P\rangle$ for some rank-one ortho-projection $P,$ and hence it is of the form $\cA=\mathrm{linspan}\langle I, G \rangle$ for some traceless self-adjoint unitary $G,$ where $G$ is given by $G=2P-I.$ We denote the intersection of $\cA$ and $M_2^{++}\ler{\C}$ by $\cA^{++}.$ Note that the $p$-th power function is a bijection on every $\cA^{++}.$ Therefore, \eqref{eq:phi-a1-a2} tells us that regardless of the choice of the MASA $\cA$, the map $\varphi: \cA^{++} \rightarrow (0, \infty)$ is a Jensen map, that is, it preserves the arithmetic mean.
\par
We show that the Jensen property, together with positivity implies continuity. We mostly follow the argument of Proposition 5. in \cite{ML1}.
Since $\cA^{++}$ is a $\mathbb{Q}$-convex subset of the $\mathbb{Q}$-linear space $\cA$, and $(0, +\infty)$ is a $\mathbb{Q}$-convex subset of the $\mathbb{Q}$-linear space $\mathbb{R}$, we obtain from \cite{Ger}, that $\varphi$ is necessarily of the form $\varphi(x)=L(x)+c$, for some fixed $c \in (0, +\infty)$ and additive function $L: \cA^{++} \to \mathbb{R}$. Since $\varphi$ takes positive values, it follows that $L$ is non-negative. Indeed, $\varphi$ is bounded from below on $\cA^{++}$, so we get that $nL(A)=L(nA) \geq m$, for some $m \in \mathbb{R}$. After dividing by $n$, and letting it tend to infinity we get that $L(A) \geq 0$. For a fixed $A \in \cA^{++}$ the additive function $t \to L(tA)$ on $\mathbb{R}$ is nonnegative on the positive half line. Theorem 9.3.1 in \cite{Kuczma} states that any additive function of the reals which is bounded from below on a set of positive Lebesgue measure is necessarily continuous. We claim that $L$ is continuous as an  $\cA^{++} \rightarrow \mathbb{R}$ function. Indeed, let $P$ and $Q$ be two perpendicular orthogonal projections, generating the commutative subalgebra $\cA$. Then $X=I+P$ and $Y=I+Q$ are linearly independent positive definite elements, thus any $A \in \cA^{++}$ can be expressed uniquely in the form of $A=uX+vY$.
If $A_n=u_nX+v_nY$ is a sequence converging to $A$, we must have that $u_n \to u$ and $v_n \to v$. We have that $|L(A_n)-L(A)|=|L(u_nX+v_nY)-L(uX+vY)|=|L(u_nX)-L(uX)+L(v_nY)-L(vY)| \leq |L(u_nX)-L(uX)|+|L(v_nY)-L(vY)|$. Since $t \to L(tX)$ is continuous, $L(u_nX)$ tends to $L(uX)$ as $u_n$ tends to $u$, and the same is true for $v_n,\,v$ and $Y$. Thus we have that $L$ is a continuous function from $\cA^{++}$.
\par
So $\varphi$ is affine and non-negative on every commutative sub-cone. Consequently, using the basis $\left\{I,G\right\}$ of $\cA$ we can expand $\varphi$ the following way: there exists a non-negative constant $c_I,$ and for every traceless self-adjoint unitary $G$ there exists a constant $c_G$ with $\abs{c_G} \leq c_I$ (this inequality follows from the non-negativity of $\varphi$) such that
\be \label{eq:phi-comm}
\varphi\ler{t I + s G}=c_I t + c_G s +(1-c_I) \qquad \ler{0\leq \abs{s}<t}.
\ee
Note that $c_G \equiv c_{-G}.$ Now let us consider the commutative sub-algebras generated by the specific traceless self-adjoint unitaries
$$
\sigma_z=\left[ \ba{cc} 1 & 0 \\ 0 & -1 \ea\right] \text{ and } \sigma_x=\left[ \ba{cc} 0 & 1 \\ 1 & 0 \ea\right].
$$
It will be useful in the sequel that conjugation with the unitary matrix
\be\label{eq:U-def}
U=\frac{1}{\sqrt{2}}\ler{\sigma_z+\sigma_x}=\frac{1}{\sqrt{2}}\left[ \ba{cc} 1 & 1 \\ 1 & -1 \ea\right]
\ee 
maps $\sigma_x$ and $\sigma_z$ to each other, that is,
\be \label{eq:U-switch}
U \sigma_z U^*=\sigma_x \text{ and } U \sigma_z U^*=\sigma_x.
\ee
Now let
\be \label{eq:abdef}
A=t I + s \sigma_z \text{ and } B=t I + s \sigma_x
\ee
for some $0\leq \abs{s}<t.$ 
By this definition and \eqref{eq:U-switch} we have
$$
U A U^*=B \text{ and } U B U^*=A.
$$
Consequently, using that functional calculus is compatible with unitary conjugations,
$$
U \ler{A \emp B} U^*=\ler{U A U^*}^{\fel}\left(\frac{I+U (A^{-\fel}BA^{-\fel})^p U^*}{2}\right)^{\frac{1}{p}}\ler{U A U^*}^{\fel}
$$
\be \label{eq:unit-eq}
=B^{\fel}\left(\frac{I+ (\ler{U A U^*}^{-\fel}\ler{U B U^*}\ler{U A U^*}^{-\fel})^p}{2}\right)^{\frac{1}{p}}B^{\fel}=B \emp A=A \emp B.
\ee
Equivalently,
$$
U A \emp B = A \emp B U,
$$
which means that $A \emp B$ is in the commutative sub-algebra generated by $U=\frac{1}{\sqrt{2}}\ler{\sigma_z+\sigma_x}$ for every possible choices of $t$ and $s$ in \eqref{eq:abdef}.
By this feature of the $p$-power mean for the family of matrix pairs
\be \label{eq:abepsdef}
A_\eps:= I+\eps \sigma_z \text{ and } B_\eps:= I+\eps \sigma_x \qquad \ler{\eps \in (-1,1)}
\ee
and by \eqref{eq:phi-comm}, we have
\be \label{eq:phi-aff}
\varphi\ler{\ler{A_\eps \emp B_\eps}^p}= c_I \fel \tr \ler{A_\eps \emp B_\eps}^p+c_U \fel\tr \ler{U\ler{A_\eps \emp B_\eps}^p}+\ler{1-c_I}.
\ee
Let us introduce the notation
\be \label{eq:g-p-def}
g_p(x)=\ler{\frac{1+x^p}{2}}^{1/p}.
\ee
We compute the left hand side of \eqref{eq:phi-aff} in the small $\eps$ regime up to second order. We start with estimating $A_\eps^{-\fel}$ with its binomial series expansion around the identity:
$$
A_{\eps}^{-\fel} B_{\eps} A_{\eps}^{-\fel}=
$$
$$
=\ler{I-\fel \eps \sigma_z+\frac{3}{8}\eps^2I+\kif}\ler{I+\eps\sigma_x}\ler{I-\fel \eps \sigma_z+\frac{3}{8}\eps^2I+\kif}
$$
$$
=I-\fel \eps \sigma_z-\fel \eps \sigma_z+\frac{1}{4}\eps^2I+\frac{3}{8}\eps^2 I+\frac{3}{8}\eps^2 I+\eps\sigma_x-\fel\eps^2\sigma_z\sigma_x-\fel\eps^2\sigma_x\sigma_z+\kif.
$$
Since $\sigma_z\sigma_x=-\sigma_x\sigma_z$, those terms cancel out, and after arranging the other terms together we arrive at
$$
A_{\eps}^{-\fel} B_{\eps} A_{\eps}^{-\fel}=I+\eps(\sigma_x-\sigma_z)+\eps^2 I+\kif.
$$
Computing the first and second order derivatives of $g_p(x)=\ler{\frac{1+x^p}{2}}^{\frac{1}{p}}$ gives that
$g_p'(x)=\frac{1}{p} \cdot \ler{\frac{1+x^p}{2}}^{\frac{1}{p}-1} \cdot \frac{px^{p-1}}{2}=\fel \cdot \ler{\frac{1+x^p}{2}}^{\frac{1}{p}-1} \cdot x^{p-1},$ hence $g_p'(1)=\fel,$ and
$g_p''(x)=\fel\ler{\frac{1}{p}-1}\ler{\frac{1+x^p}{2}}^{\frac{1}{p}-2}\cdot \frac{x^{p-1}}{2} \cdot x^{p-1}+\fel\ler{\frac{1+x^p}{2}}^{\frac{1}{p}-1} \cdot (p-1)x^{p-2}$, which leads us to $g_p''(1)=\frac{1}{4}\ler{\frac{1}{p}-1}+\frac{p-1}{2}.$
Therefore,
$$
g_p\ler{A_{\eps}^{-\fel} B_{\eps} A_{\eps}^{-\fel}}=I+\fel\eps(\sigma_x-\sigma_z)+\fel\eps^2I+\fel\ler{\frac{1}{4}\ler{\frac{1}{p}-1}+\frac{p-1}{2}}\eps^2 2I+\kif
$$
$$
=I+\fel\eps(\sigma_x-\sigma_z)+\fel\eps^2\ler{1+\fel\ler{\frac{1}{p}-1}+p-1}+\kif
$$
$$
=I+\fel\eps(\sigma_x-\sigma_z)+\fel\eps^2\ler{\fel\ler{\frac{1}{p}-1}+p}I+\kif.
$$
Since $A_{\eps}\emp B_{\eps}=A_{\eps}^{\fel}g_p\ler{A_{\eps}^{-\fel} B_{\eps} A_{\eps}^{-\fel}}A_{\eps}^{\fel}$, it follows that 
$$
A_{\eps}\emp B_{\eps}=\ler{I+\fel \eps \sigma_z-\frac{1}{8}\eps^2 I+\kif}
\times
$$
$$
\times \ler{I+\fel\eps(\sigma_x-\sigma_z)+\fel\eps^2\ler{\fel\ler{\frac{1}{p}-1}+p}I+\kif}\times
$$
$$
\times \ler{I+\fel \eps \sigma_z-\frac{1}{8} \eps^2 I+\kif}=
$$
$$
=I+\fel\eps\sigma_z-\frac{1}{8}\eps^2I+\fel\eps\sigma_z+\frac{1}{4}\eps^2I-\frac{1}{8}\eps^2I+\fel\eps(\sigma_x-\sigma_z)+
$$
$$
+\frac{1}{4}\eps^2\sigma_z(\sigma_x-\sigma_z)+\frac{1}{4}\eps^2(\sigma_x-\sigma_z)\sigma_z+\fel\eps^2\ler{p+\fel\ler{\frac{1}{p}-1}}I+\kif
$$
$$
=I+\fel\eps(\sigma_z+\sigma_x)+\eps^2\ler{-\frac{1}{8}+\frac{1}{4}-\frac{1}{8}-\frac{1}{4}-\frac{1}{4}+\frac{p}{2}+\frac{1}{4p}-\frac{1}{4}}I+\kif
$$
$$
=I+\fel\eps(\sigma_z+\sigma_x)+\ler{\frac{p}{2}+\frac{1}{4p}-\frac{3}{4}}\eps^2I+\kif.
$$
From this, we get that
$$
\ler{A_{\eps} \emp B_{\eps}}^p=I+\frac{p}{2}\eps(\sigma_z+\sigma_x)+\ler{\frac{p^2}{2}+\frac{1}{p}-\frac{3p}{4}}\eps^2I+\frac{p(p-1)}{2}\frac{\eps^2}{4}2I+\kif.
$$
Therefore, by \eqref{eq:phi-aff} we get that
$$
\varphi\ler{\ler{A_{\eps} \emp B_{\eps}}^p}=
$$
\be \label{eq:pres-lhs}
=c_I\ler{1+\ler{\frac{p^2}{2}+\frac{1}{4}-\frac{3p}{4}+\frac{p(p-1)}{4}}\eps^2}+c_U \frac{p\eps}{\sqrt{2}}+\kif+(1-c_I).
\ee
On the other hand, the binomial expansion of $A_\eps$ and $B_\eps$ around the identity gives 
$$
A_{\eps}^p=(I+\eps\sigma_z)^p=I+p\eps\sigma_z+\frac{p(p-1)}{2}\eps^2I+\kif
$$
and
$$
B_{\eps}^p=(I+\eps\sigma_x)^p=I+p\eps\sigma_x+\frac{p(p-1)}{2}\eps^2I+\kif.
$$
By \eqref{eq:phi-comm} this implies
\be \label{eq:pres-rhs-1}
\varphi(A_{\eps}^p)=c_I \ler{1+\frac{p(p-1)}{2}\eps^2}+c_{\sigma_z}p\eps+\kif+(1-c_I)
\ee
and
\be \label{eq:pres-rhs-2}
\varphi(B_{\eps}^p)=c_I \ler{1+\frac{p(p-1)}{2}\eps^2}+c_{\sigma_x}p\eps+\kif+(1-c_I)
\ee
We substitute $A_\eps$ and $B_\eps$ to the preserver equation \eqref{eq:pres-eq-phi} and get 
\be \label{eq:(PE)}
\varphi\ler{\ler{A_\eps \emp B_\eps}^p}=\frac{\varphi(A_\eps^p)+\varphi(B_\eps^p)}{2}.
\ee
By \eqref{eq:pres-lhs},\eqref{eq:pres-rhs-1}, and \eqref{eq:pres-rhs-2} we get that \eqref{eq:(PE)}  is equivalent to 
$$
c_I\ler{1+\ler{\frac{p^2}{2}+\frac{1}{4}-\frac{3p}{4}+\frac{p(p-1)}{4}}\eps^2}+c_U \cdot \frac{p\eps}{\sqrt{2}}+\kif=
$$
\be \label{eq:pres-fin}
=c_I \ler{1+\frac{p(p-1)}{2}\eps^2}+\frac{p\eps}{\sqrt{2}}\frac{c_{\sigma_z}+c_{\sigma_x}}{\sqrt{2}}+\kif.
\ee
Comparing the first order terms in \eqref{eq:pres-fin} gives that
$$
c_U=c_{\frac{1}{\sqrt{2}}(\sigma_x+\sigma_z)}=\frac{1}{\sqrt{2}}(c_{\sigma_x}+\sigma_z),
$$
while comparing the second order terms gives
\be \label{eq:sec-ord}
c_I \ler{\frac{p^2}{2}+\frac{1}{4}-\frac{3p}{4}+\frac{p(p-1)}{4}-\frac{p(p-1)}{2}}=0.
\ee
Simple calculations show that \eqref{eq:sec-ord} is equivalent to $c_I \cdot (p-1)^2=0$.
This means that if $p \neq 1$, than $c_I=0$, which means that $c_G=0$ for all self-adjoint traceless unitary $G$, which implies that $f \equiv \text{constant},$ as desired.
If $p=1$, then all positive linear functionals satisfy \eqref{eq:(PE)}.
\end{proof}
We summarise our knowledge about Kubo-Ando power mean preserving functionals in the following corollary.
\begin{corollary}
Let $\cA$ be a von Neumann algebra without type $I_1$ direct summands. If $p \in (-1,1), p\neq 0$, then any function $f: \cA^{++} \to (0,+\infty)$ satisfying $f(A \emp B)=f(A) \emp f(B)$ is necessarily constant.
\begin{proof}
The case omitting $I_2$-type algebras has been already  dealt with in Theorem 8. in \cite{ML1}, and we have just proven the statement for $I_2$-type algebras.  
\end{proof}
\end{corollary}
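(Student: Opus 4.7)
The corollary is a direct amalgamation of \cite{ML1}, Theorem 8 with the present Theorem \ref{thm:main}, orchestrated through the central type decomposition. Split $\cA = \cA_2 \oplus \cA_{\star}$, where $\cA_2$ is the type $I_2$ direct summand and $\cA_{\star}$ has neither type $I_1$ nor type $I_2$ summands; either may be trivial. Positive definite elements of $\cA$ decompose blockwise, $A = Y \oplus X$ with $Y \in \cA_2^{++}$ and $X \in \cA_{\star}^{++}$, and the Kubo-Ando $p$-power mean respects this decomposition since it is defined by continuous functional calculus.

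The plan is to handle the two summands separately and then reconcile. For any fixed $Y_0 \in \cA_2^{++}$, the map $X \mapsto f(Y_0 \oplus X)$ is a $p$-power mean preserver on $\cA_{\star}^{++}$, because $(Y_0 \oplus X) \emp (Y_0 \oplus X') = Y_0 \oplus (X \emp X')$. By \cite{ML1}, Theorem 8, this restriction is constant; denote it by $g(Y_0)$. Symmetrically, fixing $X_0 \in \cA_{\star}^{++}$, the map $Y \mapsto f(Y \oplus X_0)$ is a preserver on $\cA_2^{++}$; if one can show constancy there, it equals some $h(X_0)$. Then $f(Y \oplus X) = g(Y) = h(X)$ for all $Y, X$, which forces $g$ and $h$ to be one and the same constant, giving a global constant on $\cA^{++}$. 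So the whole problem reduces to proving constancy on $\cA_2^{++}$.

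The remaining task is therefore to extend Theorem \ref{thm:main} from $M_2(\C)$ to every type $I_2$ von Neumann algebra $\cA_2 \cong M_2(Z(\cA_2))$. The reduction to the Jensen-type map $\varphi = (\cdot)^p \circ f \circ (\cdot)^{1/p}$ and its analysis on commutative sub-cones should go through unchanged. The decisive elements $\sigma_x$, $\sigma_z$, and the switch-unitary $U$ used in the second-order perturbative computation still lie in a standard $M_2(\C)$-subalgebra of $\cA_2$ and commute with the centre, so the identity $c_I(p-1)^2 = 0$ is recovered by the very same calculation. Constancy then propagates from this standard subalgebra to the whole of $\cA_2^{++}$ by testing the preserver equation against central scalar multiples of the identity.

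The main obstacle is precisely this extension. Replacing $M_2(\C)$ by $M_2(Z)$ turns the two-dimensional real MASAs into $Z$-modules, so the step going from the Jensen property plus positivity to continuity and affinity—which in Theorem \ref{thm:main} relied on Kuczma's classical result about additive functions on $\R$ bounded below on a set of positive Lebesgue measure—must be redone more delicately, most naturally via a fiberwise argument over the Gelfand spectrum of $Z(\cA_2)$.
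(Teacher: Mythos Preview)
Your overall strategy is the same as the paper's two-sentence proof: appeal to \cite{ML1}, Theorem 8, for the part with no $I_1$ or $I_2$ summands, and to Theorem \ref{thm:main} for the type $I_2$ part. Your explicit freezing argument (fixing a block in one summand and letting the other vary) spells out the reduction that the paper leaves entirely implicit; this is fine and correct.

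You have, however, put your finger on a genuine subtlety that the paper itself glosses over. Theorem \ref{thm:main} is stated and proved only for $M_2(\C)$, whereas a general type $I_2$ von Neumann algebra is of the form $M_2(Z)$ with $Z$ an abelian von Neumann algebra that need not be $\C$. The paper's corollary simply asserts that the ``$I_2$-type'' case has been handled, without further comment. Your sketch of how one might extend the argument---running the $\sigma_x$, $\sigma_z$, $U$ computation inside the embedded copy of $M_2(\C)$ and then propagating via central multiples---is plausible, and your diagnosis that the Jensen-to-affine step (which in the proof of Theorem \ref{thm:main} rests on Kuczma's theorem over $\R$) would need to be reworked fiberwise over the spectrum of $Z$ is apt. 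But you do not carry this out, so your proposal remains a sketch at precisely the point where the paper is silent. In short, your approach agrees with the paper's and is more candid about the missing step; neither supplies a complete argument for the general type $I_2$ case.
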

We follow a similar program concerning preserver functions of the Wasserstein-mean.
\begin{theorem} \label{thm:main-wass}
Let $f: M_2^{++}\ler{\C} \rightarrow (0, \infty)$ satisfy
\be\label{pres:wass}
f\ler{A \sigma_W B}=f(A) \sigma_W f(B) \qquad \ler{A, B \in  M_2^{++}\ler{\C}}
\ee
where $\sigma_W$ denotes the Wasserstein-mean. Then $f$ is necessarily constant.
\end{theorem}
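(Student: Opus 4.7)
The plan is to follow the template of the proof of Theorem \ref{thm:main}. As a first reduction, note that $\lambda f$ solves \eqref{pres:wass} for every $\lambda>0$---this uses the homogeneity $\ler{\lambda A}\sigma_W\ler{\lambda B}=\lambda\ler{A\sigma_W B}$---so we may assume $f(I)=1$. Since for positive scalars $\lambda,\mu$ one has $\lambda\sigma_W\mu=\ler{\ler{\sqrt{\lambda}+\sqrt{\mu}}/2}^2$, the preserver equation \eqref{pres:wass} rewrites as $\sqrt{f(A\sigma_W B)}=\ler{\sqrt{f(A)}+\sqrt{f(B)}}/2$. Define $\varphi(X):=\sqrt{f(X^2)}$ for $X\in M_2^{++}\ler{\C}$. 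The commutative identity $A\sigma_W B=\ler{\ler{\sqrt{A}+\sqrt{B}}/2}^2$ then gives, for every commuting $X,Y\in M_2^{++}\ler{\C}$ (applied with $A=X^2$, $B=Y^2$),
$$
\varphi\ler{\frac{X+Y}{2}}=\frac{\varphi(X)+\varphi(Y)}{2},
$$
so $\varphi$ is a Jensen map on every MASA of $M_2\ler{\C}$.

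From here the second paragraph of the proof of Theorem \ref{thm:main} transfers verbatim: the $\mathbb{Q}$-convexity argument of \cite{Ger} together with Theorem 9.3.1 of \cite{Kuczma} force $\varphi$ to be continuous and affine on every commutative subcone. Choosing the basis $\{I,G\}$ of a MASA with $G$ a traceless self-adjoint unitary yields the representation $\varphi(tI+sG)=c_I t+c_G s+(1-c_I)$, where $c_I\geq 0$ is independent of the chosen MASA and $|c_G|\leq c_I$.

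Next, I would use the same test family $A_\eps=I+\eps\sigma_z$, $B_\eps=I+\eps\sigma_x$ and the unitary $U=\ler{\sigma_z+\sigma_x}/\sqrt{2}$ from the proof of Theorem \ref{thm:main}. The Wasserstein mean is symmetric in its arguments---in the formula $A\sigma_W B=\ler{A+B+AT+TA}/4$ with $T=A^{-1}\# B$, the identities $T^{-1}=B^{-1}\# A$ and $TAT=B$ give $BT^{-1}+T^{-1}B=AT+TA$---so $U\ler{A_\eps\sigma_W B_\eps}U^*=B_\eps\sigma_W A_\eps=A_\eps\sigma_W B_\eps$, whence $A_\eps\sigma_W B_\eps$ and its positive square root lie in the MASA generated by $U$. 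Substituting into
$$
\varphi\ler{\ler{A_\eps\sigma_W B_\eps}^{1/2}}=\frac{\varphi\ler{A_\eps^{1/2}}+\varphi\ler{B_\eps^{1/2}}}{2}
$$
and matching Taylor coefficients in $\eps$ will, at first order, reproduce the automatic additivity relation $c_U=\ler{c_{\sigma_z}+c_{\sigma_x}}/\sqrt{2}$, and at the next nontrivial order produce an equation of the form $c_I\cdot\kappa=0$ with $\kappa\neq 0$. This forces $c_I=0$, and then $|c_G|\leq c_I$ gives $c_G=0$ for every traceless self-adjoint unitary $G$. Hence $\varphi\equiv 1$ on every MASA and thus on $M_2^{++}\ler{\C}$, i.e., $f\equiv 1$.

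The main obstacle is the explicit expansion of $A_\eps\sigma_W B_\eps$: one must first expand $T_\eps:=A_\eps^{-1}\# B_\eps=A_\eps^{-1/2}\ler{A_\eps^{1/2}B_\eps A_\eps^{1/2}}^{1/2}A_\eps^{-1/2}$---which itself contains an operator square root---and then symmetrize $A_\eps T_\eps+T_\eps A_\eps$. The Pauli anti-commutation $\sigma_x\sigma_z+\sigma_z\sigma_x=0$ induces heavy cancellation at low order; should the second-order terms alone fail to separate $c_I$, the symmetric perturbation $\ler{A_\eps,B_\eps}$ must be replaced by the two-parameter family $\ler{I+\alpha\sigma_z,I+\beta\sigma_x}$ and the preserver equation matched at higher-degree mixed terms in $\alpha,\beta$.
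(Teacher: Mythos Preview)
Your approach is essentially identical to the paper's: same normalization $f(I)=1$, same substitution $\varphi=(\cdot)^{1/2}\circ f\circ(\cdot)^2$, same affine representation on MASAs, same test pair $A_\eps=I+\eps\sigma_z$, $B_\eps=I+\eps\sigma_x$, and the same $U$-commutation argument placing $A_\eps\sigma_W B_\eps$ in the MASA of $U$. The one step you leave open---the explicit second-order expansion---is carried out in the paper via the alternative form $A\sigma_W B=\tfrac{1}{4}\bigl(A+B+A^{-1/2}(A^{1/2}BA^{1/2})^{1/2}A^{1/2}+A^{1/2}(A^{1/2}BA^{1/2})^{1/2}A^{-1/2}\bigr)$ and yields $\ler{A_\eps\sigma_W B_\eps}^{1/2}=I+\tfrac{1}{4}\eps(\sigma_z+\sigma_x)-\tfrac{1}{16}\eps^2 I+\kif$, so that comparison with $A_\eps^{1/2}=I+\tfrac{1}{2}\eps\sigma_z-\tfrac{1}{8}\eps^2 I+\kif$ gives $\kappa=\tfrac{1}{8}-\tfrac{1}{16}=\tfrac{1}{16}\neq 0$; your two-parameter fallback is therefore unnecessary.
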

\begin{proof}
Note that if $f$ satisfies \eqref{pres:wass}, then so does $\lambda f$ for any $\lambda>0$. Therefore, without the loss of generality we can assume that $f(I)=1$.
Now let $\varphi$ be defined as $\varphi=(.)^{\fel} \circ f \circ (.)^2$.
Then the preserver equation \eqref{pres:wass} in terms of $\varphi$ reads as
\be\label{pres:varphi}
\varphi(\ler{A \sigma_W B}^{\fel})=\frac{\varphi(A^{\fel})+\varphi(B^{\fel})}{2} \qquad \ler{A, B \in  M_2^{++}\ler{\C}}
\ee
As we mentioned before, a straightforward calculation shows that if $A$ and $B$ commute, their Wasserstein mean coincides with the conventional power mean for $p=1/2$.
Therefore, for commuting $A,B$ \eqref{pres:varphi} reads as
$$\varphi\ler{\frac{A^{\fel}+B^{\fel}}{2}}=\frac{\varphi(A^{\fel})+\varphi(B^{\fel})}{2} \qquad \ler{A, B \in  M_2^{++}\ler{\C}}.$$
Since the square root function is a bijection on the positive definite cone, this can be rewritten as 
\be\label{affin}
\varphi \ler{\frac{A+B}{2}}=\frac{\varphi(A)+\varphi(B)}{2} \qquad \ler{A, B \in  M_2^{++}\ler{\C}}.
\ee
We get that $\varphi$ preserves the arithmetic mean of commuting elements. Note that we are in the same postion as we were  in the proof of Theorem \ref{thm:main}. We can repeat the reasoning from there, and we obtain that $\varphi$ must be continuous on all maximal Abelian subalgebras.
Therefore, $\varphi$ is affine on every commutative sub-cone, so there exists a non-negative constant $c_I$ and for every traceless self-adjoint unitary $G$ there exists a constant $c_G$ such that $|c_G| \leq c_I$ such that 
\be\label{eq:aff}
\varphi(tI+sG)=c_It+c_Gs+(1-c_I) \qquad (0 \leq |s|<t).
\ee
Now let, $\sigma_z,\sigma_x, U$ be as in Theorem \ref{thm:main}. Using that functional calculus is compatible with unitary conjugations we get that
$$U A \sigma_WB U^*=\ler{UAU^*}^{-\fel} \ler{\frac{UAU^*+U(A^{\fel}BA^{\fel})^{\fel}U^*}{2}}^2\ler{UAU^*}^{-\fel}$$
$$=B^{-\fel}\ler{\frac{B+(B^{\fel}AB^{\fel})^{\fel}}{2}}^2B^{-\fel}=B \sigma_W A=A \sigma_W B.$$
This means that $U A \sigma_W B=A \sigma_W B U$.In other words, $A \sigma_W B$ is in the commutative sub-algebra generated by $U=\frac{1}{\sqrt{2}}(\sigma_x+\sigma_z)$ for every possible choices of $s$ and $t$.
Now, by \eqref{affin} we get that
$$\varphi\ler{(A_{\eps}\sigma_W B_{\eps})^{\fel}}=c_I \fel \tr(A_{\eps} \sigma_W B_{\eps})^{\fel}+c_U \fel \tr(U(A_{\eps} \sigma_W B_{\eps})^{\fel})+(1-c_I).$$
Following the notation of Theorem \ref{thm:main}, let $A_{\eps}=I+\eps \sigma_z$, and $B_{\eps}=I+\eps \sigma_x$. Then
$A_{\eps}^{\fel}=I+\fel\eps\sigma_z-\frac{1}{8}\eps^2I+\kif$ and 
$$
A_{\eps}^{\fel}B_{\eps}A_{\eps}^{\fel}=
$$
$$
=I+\fel \eps \sigma_z-\frac{1}{8}\eps^2I+\eps \sigma_x+\fel\eps^2 \sigma_x\sigma_z+\fel \eps \sigma_z+\frac{1}{4}\eps^2\sigma_z^2+\fel\eps^2\sigma_z\sigma_x-\frac{1}{8}\eps^2I+\kif=
$$
$$
=I+\eps(\sigma_z+\sigma_x)+\kif.
$$
Then 
$$\ler{A^{\fel}BA^{\fel}}^{\fel}=I+\fel \eps(\sigma_z+\sigma_x)-\frac{1}{8}\eps^2(\sigma_z+\sigma_x)^2+\kif$$
$$=I+\frac{1}{2}\eps(\sigma_z+\sigma_x)-\frac{1}{4}\eps^2I+\kif.$$
Note that the Wasserstein mean of a pair $A,B$ can be expressed in the following form:
\be\label{eq:was-altern}
A \sigma_W B=\frac{A+B+A^{-\fel}\ler{A^{\fel}BA^{\fel}}^{\fel}A^{\fel}+A^{\fel}\ler{A^{\fel}BA^{\fel}}^{\fel}A^{-\fel}}{4}.
\ee
We compute the different components of the expression above separately and then add them together.
$$
A_{\eps}^{-\fel}\ler{A_{\eps}^{\fel}B_{\eps}A_{\eps}^{\fel}}^{\fel}A_{\eps}^{\fel}=
$$
$$
=\ler{I-\fel \eps \sigma_z+\frac{3}{8}\eps^2I+\kif}\ler{I+\frac{\eps}{2}(\sigma_z+\sigma_x)-\frac{1}{4}\eps^2I+\kif} \times
$$
\be \label{eq:was1}
\times\ler{I+\fel\eps\sigma_z-\frac{1}{8}\eps^2I+\kif}=I+\frac{\eps}{2}(\sigma_z+\sigma_x)+\frac{\eps^2}{2} \cdot \sigma_x\sigma_z+\kif.
\ee

Note that the other term, $A_{\eps}^{\fel}\ler{A_{\eps}^{\fel}B_{\eps}A_{\eps}^{\fel}}^{\fel}A_{\eps}^{-\fel}$ is the adjoint of what we just computed, therefore it equals to $I+\frac{\eps}{2}(\sigma_z+\sigma_x)+\frac{\eps^2}{2} \cdot \sigma_z\sigma_x+\kif$.
From these, we get that the Wasserstein mean of $A_{\eps},\,B_{\eps}$ is
$$
A_{\eps} \sigma_W B_{\eps}=\frac{I+\eps\sigma_z+I+\eps\sigma_x+I+\frac{\eps}{2}(\sigma_z+\sigma_x)+I+\frac{\eps}{2}(\sigma_z+\sigma_x)+\kif}{4}=
$$
\be\label{eq:wassemifinal}
=I+\frac{1}{2}\eps(\sigma_z+\sigma_x)+\kif.
\ee
Now we get that 
$$
\ler{A_{\eps} \sigma_W B_{\eps}}^{\fel}=I+\frac{1}{4}\eps(\sigma_z+\sigma_x)-\frac{1}{32}\eps^2(\sigma_z+\sigma_x)^2+\kif=
$$
\be \label{eq:wasfinal}
=I+\frac{1}{4}\eps(\sigma_z+\sigma_x)-\frac{1}{16}\eps^2I+\kif.
\ee
We get that 
$$\varphi((A_{\eps} \sigma_W B_{\eps})^{\fel})=c_I \cdot (I-\frac{1}{16}\eps^2)+c_U \frac{\sqrt{2}}{4}\eps+\kif+(1-c_I).$$
The binomial expansion of $A_{\eps}^{\fel}$ and $B_{\eps}^{\fel}$ around the identity gives us 
$$A_{\eps}^{\fel}=I+\fel\eps\sigma_z-\frac{1}{8}\eps^2I+\kif$$
$$B_{\eps}^{\fel}=I+\fel\eps\sigma_x-\frac{1}{8}\eps^2I+\kif.$$
From \eqref{eq:aff} it follows that 
$$\varphi(A_{\eps}^{\fel})=c_I(1-\frac{1}{8}\eps^2)+\fel\eps c_{\sigma_z}+(1-c_I)+\kif$$
$$\varphi(B_{\eps}^{\fel})=c_I(1-\frac{1}{8}\eps^2)+\fel\eps c_{\sigma_x}+(1-c_I)+\kif.$$
From \eqref{pres:varphi} we obtain that
$$c_I \cdot (1-\frac{1}{16}\eps^2)+c_U \frac{\sqrt{2}}{4}\eps=c_I(1-\frac{1}{8}\eps^2)+\frac{1}{4}\eps(c_{\sigma_z}+c_{\sigma_x}).$$
This implies that $c_I(\frac{1}{8}\eps^2-\frac{1}{16}\eps^2)=0$, which means that $c_I=0$, therefore $c_G \equiv 0$ for all self-adjoint traceless unitary $G$, so $f \equiv \text{constant}$ as we claimed.
\end{proof}
We can formulate a corollary similar to the case of Kubo-Ando mean preserving functionals.
\begin{corollary}
Let $\cA$ be a von Neumann algebra without type $I_1$ direct summands. Then any function $f: \cA^{++} \to (0,+\infty)$ satisfying $f(A \sigma_W B)=f(A) \sigma_W f(B)$ is necessarily constant.
\begin{proof}
The case omitting $I_2$-type algebras has been already  dealt with in \cite{ML2}, and we have just proven the statement for $I_2$-type algebras.  
\end{proof}
\end{corollary}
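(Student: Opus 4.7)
The plan is to reduce the general statement to two already-established cases: Theorem \ref{thm:main-wass} (for $M_2(\C)$, suitably lifted to general type $I_2$ von Neumann algebras) and the main result of \cite{ML2} (for von Neumann algebras without $I_1,I_2$ summands). The reduction proceeds by the standard type decomposition of $\cA$.

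Since $\cA$ has no type $I_1$ summand, write $\cA = \cA_1 \oplus \cA_2$, where $\cA_1$ is the (possibly zero) type $I_2$ summand and $\cA_2$ has no $I_1,I_2$ summand. Every $A \in \cA^{++}$ decomposes uniquely as $A = A_1 \oplus A_2$ with $A_i \in \cA_i^{++}$. Because the Wasserstein mean is defined via functional calculus, it respects direct sums:
\[
(A_1 \oplus A_2)\,\sigma_W\,(B_1 \oplus B_2) = (A_1 \sigma_W B_1) \oplus (A_2 \sigma_W B_2).
\]
For a fixed $X_2 \in \cA_2^{++}$, using that $X_2 \sigma_W X_2 = X_2$, the preserver equation \eqref{pres:wass} applied to pairs $A_1 \oplus X_2$, $B_1 \oplus X_2$ shows that the map $g_1(A_1) := f(A_1 \oplus X_2)$ is a Wasserstein preserver on $\cA_1^{++}$; symmetrically, freezing the first component yields a preserver $g_2$ on $\cA_2^{++}$. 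By the main theorem of \cite{ML2}, $g_2$ is constant, so $f(A_1 \oplus A_2)$ is independent of $A_2 \in \cA_2^{++}$. The proof is complete once $g_1$ is shown constant on $\cA_1^{++}$.

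The main obstacle lies precisely in this last step: Theorem \ref{thm:main-wass} is proved only for $\cA_1 = M_2(\C)$, while a general type $I_2$ von Neumann algebra has the form $\cA_1 \cong M_2(Z)$ for some abelian von Neumann algebra $Z$. To overcome this, I would revisit the proof of Theorem \ref{thm:main-wass} and verify that each step extends to $M_2(Z)$. The matrices $\sigma_z,\sigma_x$ and the Hadamard-like unitary $U$ embed canonically into $M_2(Z)$ via $M_2(\C) \hookrightarrow M_2(Z)$ and still satisfy $U \sigma_z U^* = \sigma_x$; MASAs in $M_2(Z)$ have the form $Z\cdot P \oplus Z\cdot(I-P)$ for a projection $P$; the second-order $\eps$-expansion of $(A_\eps \sigma_W B_\eps)^{1/2}$ around the identity is identical, taking place inside the image of $M_2(\C)$. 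The delicate point is the Jensen-to-affine step: the Ger/Kuczma continuity argument must be adapted to handle infinite-dimensional MASAs in $M_2(Z)$, producing a real-valued affine structure on each commutative sub-cone with coefficients $c_I$ and $c_G$ as before. Assuming this technical extension, the same coefficient identity $c_I\bigl(\tfrac{1}{8}-\tfrac{1}{16}\bigr)\eps^2 = 0$ forces $c_I = 0$, hence $g_1$ is constant, and $f$ is constant on all of $\cA^{++}$.
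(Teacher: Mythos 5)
Your reduction is the one the paper intends: split off the type $I_2$ summand, handle the complement by \cite{ML2}, and handle the $I_2$ part by Theorem \ref{thm:main-wass}. The direct-sum step you spell out (the Wasserstein mean acts componentwise, $X\sigma_W X=X$, so freezing one component of $f$ yields a Wasserstein preserver on the other summand) is correct and is exactly the content that the paper compresses into a single sentence; as far as the reduction goes you have matched, and made explicit, the paper's argument.

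The gap you flag, however, is genuine and is closed neither by your sketch nor by the paper: a type $I_2$ von Neumann algebra is $M_2(Z)\cong Z\otimes M_2(\C)$ for an abelian von Neumann algebra $Z$, while Theorem \ref{thm:main-wass} is proved only for $Z=\C$. The paper's one-line proof silently identifies the two, so you have located a weak point of the corollary rather than introduced one. But your proposed repair does not go through as written. When $Z$ is infinite dimensional, every maximal abelian subalgebra of $M_2(Z)$ contains the centre $Z$ and is therefore infinite dimensional, so an affine functional on its positive cone is not described by two scalars $c_I$, $c_G$ as in \eqref{eq:aff}; the one-real-variable Kuczma continuity argument needs genuine reworking; and the perturbation with $\sigma_x$, $\sigma_z$ only probes the canonical copy of $M_2(\C)$ inside $M_2(Z)$, which at best shows that $f$ is constant on that copy, not on all of $M_2(Z)^{++}$. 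So "the same coefficient identity" is not available verbatim, and "assuming this technical extension" leaves the essential point unproved. Either the corollary should be read as covering only algebras whose type $I_2$ summand is $M_2(\C)$ itself, or the expansion argument must be redone with $\sigma_z,\sigma_x$ replaced by arbitrary pairs of anticommuting self-adjoint unitaries of $M_2(Z)$ together with an affine representation valid on infinite-dimensional commutative sub-cones.
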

In the previous theorem, when we showed that $U A \sigma_W B=A \sigma_W B U$, we essentially showed that the arithmetic and the Wasserstein mean of pairs of matrices in the form of $sI+t\sigma_x,sI+t\sigma_z,\,(|t|<1)$ commute. This is not true in general. In fact, we have the following result.
\begin{remark}
Let $\cA$ be a $C^*$-algebra such that for every $A,B \in \cA^{++}$ $\frac{A+B}{2} \cdot A \sigma_W B=A \sigma_W B \cdot \frac{A+B}{2}$ holds. Then $\cA$ is commutative. Moreover, if $A \in \cA^{++}$ is an element such that for all $B \in \cA^{++}\,$ $\frac{A+B}{2} \cdot A \sigma_W B=A \sigma_W B \cdot \frac{A+B}{2}$ holds, then $A$ is central. 
\end{remark}
\begin{proof}
It will be beneficial to write the arithmetic mean in its Kubo-Ando form: $A^{\fel}\frac{I+A^{-\fel}BA^{-\fel}}{2}A^{\fel}$.
Then the equation reads as follows:
$$A^{-\fel}\ler{\frac{A+(A^{\fel}BA^{\fel})^{\fel}}{2}}^2 A^{-\fel} \cdot A^{\fel}\frac{I+A^{-\fel}BA^{-\fel}}{2}A^{\fel}=
$$
$$
=A^{\fel}\frac{I+A^{-\fel}BA^{-\fel}}{2}A^{\fel}\cdot A^{-\fel}\ler{\frac{A+(A^{\fel}BA^{\fel})^{\fel}}{2}}^2 A^{-\fel}.$$
The $A^{-\fel},\,A^{\fel}$ terms in the middle cancel out in both sides of the equation. After multiplying both sides with $4$ and $2$ we arrive at:
$$A^{-\fel}\ler{A+(A^{\fel}BA^{\fel})^{\fel}}^2 \cdot (I+A^{-\fel}BA^{-\fel})A^{\fel}=A^{\fel}(I+A^{-\fel}BA^{-\fel})\cdot \ler{A+(A^{\fel}BA^{\fel})^{\fel}}^2 A^{-\fel}.$$
Now after multiplying with $A^{\fel}$ from both sides we get that:
\be\label{eq:ariwas}
\ler{A+(A^{\fel}BA^{\fel})^{\fel}}^2 \cdot (I+A^{-\fel}BA^{-\fel})A=A(I+A^{-\fel}BA^{-\fel}) \cdot \ler{A+(A^{\fel}BA^{\fel})^{\fel}}^2 .
\ee
The left hand side of \eqref{eq:ariwas} equals to:
$$
A^3+(A^{\fel}BA^{\fel})^{\fel}A^2+A(A^{\fel}BA^{\fel})^{\fel}A+A^{\fel}BA^{\frac{3}{2}}+A^{\frac{3}{2}}BA^{\fel}+
$$
$$+(A^{\fel}BA^{\fel})^{\frac{3}{2}}+A(A^{\fel}BA^{\fel})^{\fel}A^{-\fel}BA^{\fel}+A^{\fel}B^2A^{\fel}.
$$
The right hand side equals to:
$$
A^3+A(A^{\fel}BA^{\fel})^{\fel}A+A^2(A^{\fel}BA^{\fel})^{\fel}+A^{\frac{3}{2}}BA^{\fel}+A^{\fel}BA^{\frac{3}{2}}+
$$
$$
+A^{\fel}BA^{-\fel}(A^{\fel}BA^{\fel})^{\fel}A+(A^{\fel}BA^{\fel})^{\frac{3}{2}}+A^{\fel}B^2A^{\fel}.
$$
Most of the terms cancel out and what we get is:
$$(A^{\fel}BA^{\fel})^{\fel}A^2+A(A^{\fel}BA^{\fel})^{\fel}A^{-\fel}BA^{\fel}=A^2(A^{\fel}BA^{\fel})^{\fel}+A^{\fel}BA^{-\fel}(A^{\fel}BA^{\fel})^{\fel}A.$$
This must hold for all $A,B \in \cA^{++}$, so if we replace $B$ with $t^2B$ and take derivative at $t=0$ we get that
$$A^2(A^{\fel}BA^{\fel})^{\fel}=(A^{\fel}BA^{\fel})^{\fel}A^2 \qquad A,B \in \cA^{++}.$$
After multiplying with $A^{-2}$ from the right hand side and then squaring both sides we arrive at
$$A^{\frac{5}{2}}BA^{-\frac{3}{2}}=A^{\fel}BA^{\fel},$$
which is equivalent to $A^2B=BA^2$.
An element commutes with the same elements as its square root, therefore we get that the positive definite cone of $\cA$ is commutative. Since any element of a $C^*$-algebra is the linear combination of at most four positive elements, we get that $\cA$ is commutative.
Note that we showed more: we did not change $A$ in the proof at all, so the statement can be strengthened in the following way: if $A$ is an element in $\cA^{++}$ such that \eqref{eq:ariwas} holds for all $B \in \cA^{++}$, then $A$ is necessarily a central element of the algebra. 
\end{proof}

During the proof of Theorem \ref{thm:main} we showed that the arithmetic and Kubo-Ando $p$-power means of pairs of matrices in the form of $sI+t\sigma_x,\,sI+t\sigma_z, \, (|t|<1)$ commute. We have a similar result to the one that was discussed in the previous Remark.

\begin{remark}
Let $p \in [-1,1)\setminus{\{0\}}$ and let $\cA$ be a $C^*$-algebra such that $A\emp B \cdot \frac{A+B}{2}=\frac{A+B}{2} \cdot A \emp B$ for every $A,B \in \cA^{++}$. Then $\cA$ is commutative. Moreover, if $A \in \cA^{++}$ and $A\emp B \cdot \frac{A+B}{2}=\frac{A+B}{2} \cdot A \emp B$ for every $B \in \cA^{++}$ for some $p \in [-1,1)\setminus{\{0\}}$ then $A$ is central. 
\end{remark}

\begin{proof}
After multiplying with $2^{\frac{1}{p}} \cdot 2$ we are left with
$$\ler{I+(A^{-\fel}BA^{-\fel})^p}^{\frac{1}{p}}A(I+A^{-\fel}BA^{-\fel})=(I+A^{-\fel}BA^{-\fel})A\ler{I+(A^{-\fel}BA^{-\fel})^p}^{\frac{1}{p}}.$$
This must be true for all positive definite elements. If $A,B$ are positive definite elements, then so is $A^{\fel}BA^{\fel}$. Replacing $B$ with $A^{\fel}BA^{\fel}$ gets us to
$$\ler{I+B^p}^{\frac{1}{p}}A(I+B)=(I+B)A\ler{I+B^p}^{\frac{1}{p}}.$$
Now we replace $B$ with $\eps^{\frac{1}{p}}B$ and then we use the binomial series expansion around the identity for small values of $\eps$.
$$\ler{I+\frac{1}{p}\eps B^p+\Oeps}A\ler{I+\eps^{\frac{1}{p}}B}=\ler{I+\eps^{\frac{1}{p}}B}A\ler{I+\frac{1}{p}\eps B^p+\Oeps}.$$
We get that 
\be
\begin{split}
A+\eps^{\frac{1}{p}}AB+\frac{1}{p}\eps B^pA+\frac{1}{p}\eps^{\frac{1}{p}+1}B^pAB+\Oeps= \\
=A+\frac{1}{p}\eps AB^p+\eps^{\frac{1}{p}}BA+\frac{1}{p}\eps^{\frac{1}{p}+1}BAB^p+\Oeps.
\end{split}
\ee
Let us assume first that $0<p<1$. Therefore, $\frac{1}{p}>1$ and it makes sense to differentiate with respect to $\eps$ and then plug in $\eps=0$.
From this, we get that $\frac{1}{p}AB^p=\frac{1}{p}B^pA$. Since the function $t \to t^p$ is a bijection on the positive definite cone, we get that $AB=BA$ for all positive definite $A,B$ which concludes the proof. Note that similarly to the previous remark, we did not change $A$ during the proof, therefore the statement can be strengthened in the following way: if $A$ is a fixed positive definite element in a $C^*$-algebra $\cA$ such that $A\emp B \cdot \frac{A+B}{2}=\frac{A+B}{2} \cdot A \emp B$ for some $p \in (-1,1)\setminus{\{0\}}$ for all $B \in \cA^{++}$, $A$ is necessarily central. 
Now let us turn to the case $-1<p<0$.  The fact that $Am_{-p}B=(A^{-1}m_pB^{-1})^{-1}$ implies that $Am_pB=(A^{-1}m_{-p}B^{-1})^{-1}$, therefore we can assume that $0<p<1$ in this case as well, if we write $A^{-1},B^{-1}$ in the Kubo-Ando power mean and invert it, and $A,B$ in the arithmetic mean.
Once again, we interpret the arithmetic mean as a $p$-power mean for $p=1$. The equation we have is
\be
\begin{split}
A^{\fel}\ler{\frac{I+(A^{\fel}B^{-1}A^{\fel})^p}{2}}^{-\frac{1}{p}}A^{\fel} \cdot A^{\fel} \frac{I+A^{-\fel}BA^{-\fel}}{2}A^{\fel} \\
= A^{\fel} \frac{I+A^{-\fel}BA^{-\fel}}{2}A^{\fel} \cdot A^{\fel}\ler{\frac{I+(A^{\fel}B^{-1}A^{\fel})^p}{2}}^{-\frac{1}{p}}A^{\fel}.
\end{split}
\ee
We can cancel out the $A^{\fel}$ terms from the ends of both sides and after multiplying with $2^{-\frac{1}{p}+1}$ we are left with
$$\ler{I+(A^{\fel}B^{-1}A^{\fel})^p}^{-\frac{1}{p}}A(I+A^{-\fel}BA^{-\fel})=(I+A^{-\fel}BA^{-\fel})A\ler{I+(A^{\fel}B^{-1}A^{\fel})^p}^{-\frac{1}{p}}.$$
Just like in the case $0<p<1$ we can replace $B$ with $A^{\fel}BA^{\fel}$ and arrive at
$$\ler{I+(B^{-1})^p}^{-\frac{1}{p}}A(I+B)=(I+B)A\ler{I+(B^{-1})^p}^{-\frac{1}{p}}.$$
Now if we replace $B$ with $B^{-1}$ and then replace it with $\eps^{\frac{1}{p}}B$ we get that
$$(I+\eps B^p)^{-\frac{1}{p}}A(I+\eps^{-\frac{1}{p}}B^{-1})=(I+\eps^{-\frac{1}{p}}B^{-1})A(I+\eps B^p)^{-\frac{1}{p}}.$$
Now we use binomial expansion around the identity:
$$(I-\frac{1}{p}\eps B^p+\Oeps)A(I+\eps^{-\frac{1}{p}}B^{-1})=(I+\eps^{-\frac{1}{p}}B^{-1})A(I-\frac{1}{p}\eps B^p+\Oeps).$$
This is equivalent to
\be
\begin{split}
A+\eps^{-\frac{1}{p}}AB^{-1}-\frac{1}{p}\eps B^p A-\frac{1}{p}\eps^{1-\frac{1}{p}}B^p AB^{-1}+\Oeps+\eps^{-\frac{1}{p}}AB^{-1}\Oeps = \\
=A-\frac{1}{p}\eps AB^p+\eps^{-\frac{1}{p}}B^{-1}A-\frac{1}{p}\eps^{1-\frac{1}{p}}B^{-1}AB^p+\Oeps+\eps^{-\frac{1}{p}}B^{-1}A \Oeps. 
\end{split}
\ee
$A$ cancels out and after multiplying with $\eps^{\frac{1}{p}}$ we get that
\be
\begin{split}
AB^{-1}-\frac{1}{p}\eps^{1+\frac{1}{p}} B^pA-\frac{1}{p}\eps B^pAB^{-1}+\Oeps \\
=-\frac{1}{p}\eps^{1+\frac{1}{p}} AB^p+B^{-1}A-\frac{1}{p}\eps B^{-1}AB^p+\Oeps.
\end{split}
\ee
Now we differentiate with respect to $\eps$ and then plug in $\eps=0$. We arrive at
$$-\frac{1}{p}B^pAB^{-1}=-\frac{1}{p}B^{-1}AB^p.$$
This means that $AB^{p+1}=B^{p+1}A$. Since $t \to t^{p+1}$ is a bijection on the positive definite cone, we obtain that $A$ commutes with any positive definite element of $\cA$ which concludes the proof.
Proof for the case $p=-1$.
We have that 
\be\label{eq:harmonikus}
\frac{2}{A^{-1}+B^{-1}} \cdot \frac{A+B}{2}=\frac{A+B}{2} \cdot \frac{2}{A^{-1}+B^{-1}}.
\ee
After multiplying from both sides with $(A^{-1}+B^{-1})$ we get
$$(A+B)(A^{-1}+B^{-1})=(A^{-1}+B^{-1})(A+B).$$
This is equivalent to
$$AB^{-1}+BA^{-1}=A^{-1}B+B^{-1}A.$$
Now we replace $B$ with $tB$, and after multiplying with $t$ we obtain that
$$AB^{-1}+t^2BA^{-1}=t^2A^{-1}B+B^{-1}A.$$
We have a polynomial of $t$ on each side of the equation, say $p(t)$ and $q(t)$, and these polynomials are equal for all $t>0$. Since two polynomials are equal if and only if their coefficients coincide, we get that $AB^{-1}=B^{-1}A$, which implies that $A$ commutes with the positive definite cone of $\cA$, therefore $A$ is central. 
\end{proof}

\paragraph*{{\bf Acknowledgement.}}
We are grateful to the anonymous referee for his/her valuable comments and suggestions.

\end{document}